\newtheorem{theorem}{Theorem}[section]
\newtheorem{definition}[theorem]{Definition}
\newtheorem{lemma} [theorem]{Lemma}
\title{This is the title}
\begin{document}
\begin{center}
{\bf{New Identity on Parseval p-Approximate Schauder Frames and Applications}}\\
K. Mahesh Krishna and P. Sam Johnson  \\
Department of Mathematical and Computational Sciences\\ 
National Institute of Technology Karnataka (NITK), Surathkal\\
Mangaluru 575 025, India  \\
Emails: kmaheshak@gmail.com,  sam@nitk.edu.in\\

Date: \today
\end{center}

\hrule
\vspace{0.5cm}
\textbf{Abstract}: A very useful identity for Parseval frames for Hilbert spaces was obtained by Balan, Casazza, Edidin, and Kutyniok. In this paper, we obtain a similar identity for Parseval p-approximate Schauder frames for Banach spaces which admits a homogeneous semi-inner product in the sense of Lumer-Giles.

\textbf{Keywords}:  Frame, Frame Identity, Approximate Schauder Frame.

\textbf{Mathematics Subject Classification (2020)}: 42C15, 46B25, 46C50.


\section{Introduction}
	Let $\mathcal{H}$ be a separable Hilbert space over $\mathbb{K}$ ($\mathbb{R}$ or $\mathbb{C}$). A sequence $\{\tau_n\}_n$ in  $\mathcal{H}$ is said to be a
	frame for $\mathcal{H}$ if there exist two constants $0<a\leq b <\infty$ for which
	\begin{align}\label{FRAMEINEQUALITY}
	a\|h\|^2 \leq \sum_{n=1}^\infty |\langle h, \tau_n\rangle|^2\leq b\|h\|^2, \quad \forall h \in \mathcal{H}.
	\end{align}
	The constants $a$ and $b$ in (\ref{FRAMEINEQUALITY}) are called as the lower and upper frame bound, respectively. 	The largest lower frame bound and the smallest upper frame bound are called the optimal frame bounds. 	If  $a=1=b$, then the frame is called as a Parseval frame (see \cite{DUFFIN, OLEBOOK}). Let  $\{\tau_n\}_n$ be a frame for  $\mathcal{H}$. Then it is well-known that 
the frame operator 
$
S_\tau :\mathcal{H} \ni h \mapsto \sum_{n=1}^\infty \langle h, \tau_n\rangle\tau_n\in
\mathcal{H}
$
is  a well-defined  bounded linear, positive and invertible operator (see \cite{OLEBOOK}). Further, the sequence $\{\tilde{\tau}_n:= S_\tau^{-1}\tau_n\}_n$ is also a frame for  $\mathcal{H}$ which is known as the canonical dual frame for $\{\tau_n\}_n$. For a given subset $\mathbb{M} $ of $ \mathbb{N}$, we denote the complement of $\mathbb{M} $ by $\mathbb{M}^\text{c}$ and define $S_\mathbb{M} :  \mathcal{H}\to \mathcal{H}$ by $S_\mathbb{M}(h)= \sum_{n\in \mathbb{M}} \langle h, \tau_n\rangle\tau_n$. By the inequalities in (\ref{FRAMEINEQUALITY}), $
S_\mathbb{M} $
 is a well-defined bounded positive operator which may not be invertible.  We have the following identities for frames for Hilbert spaces  given by Balan, Casazza, Edidin, and Kutyniok.
  
\begin{theorem}\cite{BALACASAZZAEDIDINKUTYNIOK, BALACASAZZAEDIDINKUTYNIOKFIRST} (Frame identity)  \label{CASAZZAGENERAL}
	Let $\{\tau_n\}_n$ be a  frame for  $\mathcal{H}$. Then for every $\mathbb{M} \subseteq \mathbb{N}$, 	
	\begin{align*}
	\sum_{n\in \mathbb{M}}|\langle h, \tau_n\rangle|^2-\sum_{n=1}^\infty|\langle S_\mathbb{M}h, \tilde{\tau}_n\rangle|^2=\sum_{n\in \mathbb{M}^\text{c}}|\langle h, \tau_n\rangle|^2-\sum_{n=1}^\infty|\langle S_{\mathbb{M}^\text{c}}h, \tilde{\tau}_n\rangle|^2,\quad \forall h \in \mathcal{H}.
	\end{align*}	
\end{theorem}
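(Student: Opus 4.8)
The plan is to reduce both sides of the claimed identity to quadratic forms of bounded operators and then verify a single operator identity by pure algebra. The first step is bookkeeping: since $\mathbb{M}$ and $\mathbb{M}^\text{c}$ partition $\mathbb{N}$ and each series defines a bounded positive operator, we have $S_\mathbb{M} + S_{\mathbb{M}^\text{c}} = S_\tau$, and both $S_\mathbb{M}$ and $S_{\mathbb{M}^\text{c}}$ are self-adjoint. I would also record that $S_\tau$ is positive, self-adjoint and invertible, so that $S_\tau^{-1}$ is self-adjoint.

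Next I would rewrite each of the four sums as a quadratic form. Expanding the definition of $S_\mathbb{M}$ gives immediately
\[
\sum_{n\in\mathbb{M}}|\langle h, \tau_n\rangle|^2 = \langle S_\mathbb{M}h, h\rangle, \qquad \sum_{n\in\mathbb{M}^\text{c}}|\langle h, \tau_n\rangle|^2 = \langle S_{\mathbb{M}^\text{c}}h, h\rangle.
\]
For the dual-frame sums I would use $\langle g, \tilde{\tau}_n\rangle = \langle S_\tau^{-1}g, \tau_n\rangle$ together with the identity $\sum_{n=1}^\infty |\langle f, \tau_n\rangle|^2 = \langle S_\tau f, f\rangle$ applied to $f = S_\tau^{-1}g$, which yields
\[
\sum_{n=1}^\infty |\langle S_\mathbb{M}h, \tilde{\tau}_n\rangle|^2 = \langle S_\mathbb{M}S_\tau^{-1}S_\mathbb{M}h, h\rangle,
\]
and analogously with $\mathbb{M}^\text{c}$ in place of $\mathbb{M}$, using the self-adjointness of $S_\mathbb{M}$ and $S_{\mathbb{M}^\text{c}}$.

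With these substitutions the claimed identity is equivalent to the operator identity
\[
S_\mathbb{M} - S_\mathbb{M}S_\tau^{-1}S_\mathbb{M} = S_{\mathbb{M}^\text{c}} - S_{\mathbb{M}^\text{c}}S_\tau^{-1}S_{\mathbb{M}^\text{c}}.
\]
To prove this I would set $A := S_\mathbb{M}$ and $S := S_\tau$, so that $S_{\mathbb{M}^\text{c}} = S - A$, and simply expand the right-hand side: $(S-A)S^{-1}(S-A) = (I - AS^{-1})(S-A) = S - 2A + AS^{-1}A$, whence $(S-A) - (S-A)S^{-1}(S-A) = A - AS^{-1}A$, which is exactly the left-hand side. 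Equality of operators gives equality of the corresponding quadratic forms for every $h$, completing the argument.

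The quadratic-form computations are routine; the only places demanding care are the manipulation $\langle g, \tilde{\tau}_n\rangle = \langle S_\tau^{-1}g, \tau_n\rangle$, which relies on the self-adjointness of $S_\tau^{-1}$, and the convergence and well-definedness of all the series, guaranteed by the frame inequalities in (\ref{FRAMEINEQUALITY}). The genuine content is the symmetric cancellation captured by $A - AS^{-1}A = (S-A) - (S-A)S^{-1}(S-A)$; I expect this algebraic identity — and the recognition that the whole statement collapses onto it once everything is phrased through the operators $S_\mathbb{M}$, $S_{\mathbb{M}^\text{c}}$ and $S_\tau^{-1}$ — to be the main (and essentially the only) obstacle in the Hilbert space setting.
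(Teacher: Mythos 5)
Your proposal is correct and follows essentially the same route as the paper: reduce all four sums to quadratic forms and establish the single operator identity $S_\mathbb{M}-S_\mathbb{M}S_\tau^{-1}S_\mathbb{M}=S_{\mathbb{M}^\text{c}}-S_{\mathbb{M}^\text{c}}S_\tau^{-1}S_{\mathbb{M}^\text{c}}$, which the paper obtains from Lemma \ref{LEMMA} applied to $U=S_\tau^{-1}S_\mathbb{M}$, $V=S_\tau^{-1}S_{\mathbb{M}^\text{c}}$ and you obtain by direct expansion of $(S-A)S^{-1}(S-A)$. The two derivations of that identity are algebraically equivalent, so there is no substantive difference.
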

\begin{theorem}\cite{BALACASAZZAEDIDINKUTYNIOK, BALACASAZZAEDIDINKUTYNIOKFIRST} (Parseval frame identity) \label{SECOND}
	Let $\{\tau_n\}_n$ be a Parseval frame for  $\mathcal{H}$. Then for every $\mathbb{M} \subseteq \mathbb{N}$, 
	\begin{align*}
	\sum_{n\in \mathbb{M}}|\langle h, \tau_n\rangle|^2-\left\|\sum_{n\in \mathbb{M}}\langle h, \tau_n\rangle \tau_n\right\|^2=\sum_{n\in \mathbb{M}^\text{c}}|\langle h, \tau_n\rangle|^2-\left\|\sum_{n\in \mathbb{M}^\text{c}}\langle h, \tau_n\rangle \tau_n\right\|^2,\quad \forall h \in \mathcal{H}.
	\end{align*}
\end{theorem}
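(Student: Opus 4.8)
The plan is to exploit the defining feature of a Parseval frame: since $a=b=1$, the frame operator satisfies $S_\tau = I_{\mathcal{H}}$, so the canonical dual collapses onto the frame itself ($\tilde\tau_n = \tau_n$) and, most importantly, the two partial frame operators add up to the identity:
\begin{align*}
S_\mathbb{M} + S_{\mathbb{M}^\text{c}} = S_\tau = I_{\mathcal{H}}.
\end{align*}
First I would record that each $S_\mathbb{M}$ is a positive (hence self-adjoint) bounded operator, as noted in the introduction, and that $h - S_\mathbb{M}h = S_{\mathbb{M}^\text{c}}h$ for every $h$.

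Next I would rewrite both terms on the left-hand side as inner products built from $S_\mathbb{M}$. Expanding the coefficient sum gives $\sum_{n\in\mathbb{M}}|\langle h,\tau_n\rangle|^2 = \langle S_\mathbb{M}h, h\rangle$, while the second term is simply $\|S_\mathbb{M}h\|^2 = \langle S_\mathbb{M}h, S_\mathbb{M}h\rangle$. Subtracting and using $h - S_\mathbb{M}h = S_{\mathbb{M}^\text{c}}h$ yields
\begin{align*}
\sum_{n\in\mathbb{M}}|\langle h,\tau_n\rangle|^2 - \Big\|\sum_{n\in\mathbb{M}}\langle h,\tau_n\rangle\tau_n\Big\|^2
= \langle S_\mathbb{M}h,\, h - S_\mathbb{M}h\rangle
= \langle S_\mathbb{M}h,\, S_{\mathbb{M}^\text{c}}h\rangle.
\end{align*}
Running the identical computation with the roles of $\mathbb{M}$ and $\mathbb{M}^\text{c}$ interchanged shows that the right-hand side of the asserted identity equals $\langle S_{\mathbb{M}^\text{c}}h,\, S_\mathbb{M}h\rangle$. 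Thus the whole statement reduces to the equality $\langle S_\mathbb{M}h, S_{\mathbb{M}^\text{c}}h\rangle = \langle S_{\mathbb{M}^\text{c}}h, S_\mathbb{M}h\rangle$.

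The step I expect to carry the real content is this last equality, since the two inner products are complex conjugates of one another and need not coincide for arbitrary vectors. The resolution is that each is manifestly \emph{real}: the left-hand expression $\langle S_\mathbb{M}h, h\rangle - \|S_\mathbb{M}h\|^2$ is a difference of the quadratic form of a positive operator (real by self-adjointness of $S_\mathbb{M}$) and a squared norm, hence real; likewise for the right-hand side. A real number equals its own conjugate, so $\langle S_\mathbb{M}h, S_{\mathbb{M}^\text{c}}h\rangle = \overline{\langle S_\mathbb{M}h, S_{\mathbb{M}^\text{c}}h\rangle} = \langle S_{\mathbb{M}^\text{c}}h, S_\mathbb{M}h\rangle$, which closes the argument. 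In the real scalar case this conjugate-symmetry is automatic; the only care needed in the complex case is invoking positivity of $S_\mathbb{M}$ to guarantee reality, and this is exactly the point where the Parseval hypothesis (through $S_\tau = I$) does the work.
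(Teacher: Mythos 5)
Your argument is correct. The paper gives no proof of this theorem (it is cited from the literature) and instead proves its Banach-space analogue (Theorem \ref{PASFID}); the mechanism there is the same as yours: using $S_{\mathbb{M}}+S_{\mathbb{M}^{\text{c}}}=I_{\mathcal{H}}$, both sides reduce to the symmetry of $\langle S_{\mathbb{M}}h,S_{\mathbb{M}^{\text{c}}}h\rangle$ in its two arguments, which the paper handles through the operator identity of Lemma \ref{LEMMA} ($U-U^{2}=V-V^{2}$) and you handle by observing that the quadratic form is real --- so this is essentially the same route.
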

Theorem \ref{SECOND} is applied to get the  following remarkable lower estimate for Parseval frames.
\begin{theorem}\cite{BALACASAZZAEDIDINKUTYNIOK, GAVRUTA}\label{THIRD}
	Let $\{\tau_n\}_n$ be a Parseval frame for  $\mathcal{H}$. Then for every $\mathbb{M} \subseteq \mathbb{N}$, 
	\begin{align*}
	\sum_{n\in \mathbb{M}}|\langle h, \tau_n\rangle|^2+\left\|\sum_{n\in \mathbb{M}^\text{c}}\langle h, \tau_n\rangle \tau_n\right\|^2=\sum_{n\in \mathbb{M}^\text{c}}|\langle h, \tau_n\rangle|^2+\left\|\sum_{n\in \mathbb{M}}\langle h, \tau_n\rangle \tau_n\right\|^2\geq \frac{3}{4}\|h\|^2,\quad \forall h \in \mathcal{H}.
	\end{align*}
	Further, the bound 3/4 is optimal.
\end{theorem}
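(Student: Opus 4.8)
The plan is to separate the statement into three pieces: the equality of the two sides, the lower bound $\tfrac{3}{4}\|h\|^2$, and the sharpness of the constant. The first piece is immediate from the Parseval frame identity already established in Theorem \ref{SECOND}. Writing $S_\mathbb{M}h=\sum_{n\in\mathbb{M}}\langle h,\tau_n\rangle\tau_n$ and $S_{\mathbb{M}^\text{c}}h=\sum_{n\in\mathbb{M}^\text{c}}\langle h,\tau_n\rangle\tau_n$, I would simply add the quantity $\|S_\mathbb{M}h\|^2+\|S_{\mathbb{M}^\text{c}}h\|^2$ to both sides of the identity in Theorem \ref{SECOND}. On the left the $\|S_\mathbb{M}h\|^2$ term cancels and on the right the $\|S_{\mathbb{M}^\text{c}}h\|^2$ term cancels, leaving exactly the two expressions in the claim, so the equality holds with no further work.

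For the lower bound I would exploit that a Parseval frame has frame operator $S_\tau=I$, whence $S_\mathbb{M}+S_{\mathbb{M}^\text{c}}=I$. Setting $A:=S_\mathbb{M}$, which by the discussion in the introduction is a positive self-adjoint operator with $0\leq A\leq I$, I rewrite the common value entirely in terms of $A$. Since $\sum_{n\in\mathbb{M}}|\langle h,\tau_n\rangle|^2=\langle Ah,h\rangle$ and, using self-adjointness, $\|S_{\mathbb{M}^\text{c}}h\|^2=\|(I-A)h\|^2=\langle (I-A)^2h,h\rangle$, the left-hand side equals $\langle (A+(I-A)^2)h,h\rangle=\langle(A^2-A+I)h,h\rangle$. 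Completing the square at the operator level gives $A^2-A+I=\bigl(A-\tfrac{1}{2}I\bigr)^2+\tfrac{3}{4}I\geq\tfrac{3}{4}I$, because $\bigl(A-\tfrac{1}{2}I\bigr)^2$ is positive. Taking the quadratic form against $h$ yields $\langle(A^2-A+I)h,h\rangle\geq\tfrac{3}{4}\|h\|^2$, which is the desired estimate.

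Finally, for optimality I would exhibit a case of equality, which by the computation above occurs precisely when $h$ lies in the kernel of $A-\tfrac{1}{2}I$, that is, when $S_\mathbb{M}$ carries the eigenvalue $\tfrac{1}{2}$. The simplest instance is $\mathcal{H}=\mathbb{K}$ with the Parseval frame $\tau_1=\tau_2=\tfrac{1}{\sqrt{2}}$ and $\mathbb{M}=\{1\}$: here $S_\mathbb{M}=\tfrac{1}{2}I$, and a direct check gives left-hand side $=\tfrac{1}{2}|h|^2+\tfrac{1}{4}|h|^2=\tfrac{3}{4}|h|^2$, so the constant $3/4$ cannot be improved. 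I do not anticipate a serious obstacle; the only points requiring care are the self-adjointness of $A$ (needed to pass from $\|(I-A)h\|^2$ to $\langle(I-A)^2h,h\rangle$) and interpreting the completed square as a genuine operator inequality, i.e.\ an inequality of positive quadratic forms, so that it may be evaluated against an arbitrary $h\in\mathcal{H}$.
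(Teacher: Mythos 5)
Your proof is correct, and it is essentially the argument the paper itself uses for its Banach-space analogue (Theorem \ref{LOWER234}): the paper states Theorem \ref{THIRD} as a cited result without proof, but its proof of Theorem \ref{LOWER234} likewise gets the equality from the Parseval identity and the bound by completing the square, via $S_\mathbb{M}^2+S_{\mathbb{M}^\text{c}}^2=2\left(S_\mathbb{M}-\tfrac{1}{2}I\right)^2+\tfrac{1}{2}I$, which is the same computation as your $A+(I-A)^2=\left(A-\tfrac{1}{2}I\right)^2+\tfrac{3}{4}I$. Your explicit two-vector example establishing sharpness is a nice addition, since the paper defers optimality to the literature.
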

Theorem \ref{THIRD} is used in the study of Parseval frames with finite excesses (see \cite{BAKICBERIC, BALANCASAZZAHEIL12}). 
In this paper, we obtain Theorems \ref{CASAZZAGENERAL}, \ref{SECOND} and \ref{THIRD} for certain classes of Banach spaces known as homogeneous semi-inner product spaces. These spaces are introduced by Lumer \cite{LUMER} and studied extensively by Giles \cite{GILES} (see \cite{DRAGOMIR} for a comprehensive look on semi-inner products). We now recall the fundamentals of semi-inner products. Let $\mathcal{X}$ be a vector space over $\mathbb{K}$. A map $[\cdot, \cdot]:\mathcal{X}\times \mathcal{X} \to \mathbb{K}$ is said to be a homogeneous semi-inner product if it satisfies the following:
\begin{enumerate}[\upshape(i)]
	\item $[x,x]>0$, for all $x \in \mathcal{X}, x\neq 0$.
	\item $[\lambda x,y]=\lambda [x,y]$, for all $x,y \in \mathcal{X}$, for all $\lambda \in \mathbb{K}$.
\item 	$[x, \lambda y]=\overline{\lambda} [x,y]$, for all $x,y \in \mathcal{X}$, for all $\lambda \in \mathbb{K}$.
	\item $[x+y, z]=[x, z]+[y, z]$, for all $x,y,z \in \mathcal{X}$.
	\item $| [x,y]|^2\leq [x,x][y,y]$, for all $x,y \in \mathcal{X}$.
\end{enumerate}
A homogeneous semi-inner product $[\cdot, \cdot]$ induces a norm which is defined as $\|x\|\coloneqq \sqrt{[x,x]}$. A prototypical example to keep in mind while working on homogeneous semi-inner product spaces is the standard $\ell^p(\mathbb{N})$ space ($1<p<\infty$) equipped with semi-inner product defined as follows : For $x=\{x_n\}_n,$  $y=\{y_n\}_n\in \ell^p(\mathbb{N})$, define 
\begin{align*}
[x,y]\coloneqq \begin{cases*}
\frac{\sum\limits_{n=1}^{\infty}x_n\overline{y_n}|y_n|^{p-2}}{\|y\|_p^{p-2}} \quad& if $ y \neq 0 $ \\
0 & if $ y=0.$ \\
\end{cases*}
\end{align*}
We now see Riesz  representation theorem for certain classes of Banach spaces. 
\begin{theorem}\cite{GILES}\label{RIESZ}
Let $\mathcal{X}$ be a complete homogeneous semi-inner product space. If $\mathcal{X}$ is continuous and uniformly convex, then for every bounded linear functional $f:  \mathcal{X}\to \mathbb{K}$, there exists a unique $y \in \mathcal{X}$  such that $f(x)=[x,y]$, for all $x \in \mathcal{X}$.
\end{theorem}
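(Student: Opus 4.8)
The plan is to mimic the classical Hilbert-space argument, replacing the orthogonal projection by the metric projection onto a closed hyperplane, whose existence and uniqueness is exactly what the completeness and uniform convexity hypotheses provide. First I would dispose of the trivial case $f=0$ by taking $y=0$; property (i) forces this to be the only representer, since $[x,y]=0$ for all $x$ applied at $x=y$ gives $[y,y]=0$, whence $y=0$. So assume $f\neq 0$ and set $N:=\ker f$, a proper closed subspace (a hyperplane) of $\mathcal{X}$.

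Next, fix any $x_0\in\mathcal{X}\setminus N$. Since $N$ is a nonempty closed convex set in a complete uniformly convex space, there is a unique nearest point $n_0\in N$ to $x_0$; put $u:=x_0-n_0$, which is nonzero and satisfies $f(u)=f(x_0)\neq 0$. The minimality $\|u\|\le\|x_0-n'\|$ for all $n'\in N$, applied with $n'=n_0-tn$ for $n\in N$ and a scalar $t$, yields $\|u\|\le\|u+tn\|$ for every $n\in N$ and every $t$; that is, $u$ is Birkhoff--James orthogonal to $N$. Here the \emph{continuity} hypothesis enters: in a continuous semi-inner product space the induced norm is smooth (Gateaux differentiable), and Birkhoff--James orthogonality is then characterised by the vanishing of the semi-inner product, so $[n,u]=0$ for all $n\in N$.

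With this in hand the representer is forced. For an arbitrary $x\in\mathcal{X}$, the vector $x-\tfrac{f(x)}{f(u)}u$ lies in $N$, so by additivity and homogeneity in the first slot (properties (iv) and (ii)),
\[
[x,u]=\left[x-\frac{f(x)}{f(u)}u,\;u\right]+\frac{f(x)}{f(u)}[u,u]=\frac{f(x)}{f(u)}\|u\|^2.
\]
Rearranging and using conjugate homogeneity in the second slot (property (iii)) gives $f(x)=[x,y]$ with $y:=\tfrac{\overline{f(u)}}{\|u\|^{2}}u$, the desired element.

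Finally, for uniqueness suppose $[x,y]=[x,y']$ for all $x$. Testing at $x=y$ gives $\|y\|^2=[y,y']$, and at $x=y'$ gives $\|y'\|^2=[y',y]$; the Cauchy--Schwarz inequality (property (v)) then forces $\|y\|=\|y'\|$ together with equality in Cauchy--Schwarz, and strict convexity (a consequence of uniform convexity) forces $y=y'$. The main obstacle I expect is the second paragraph: correctly importing the two geometric facts — existence and uniqueness of the metric projection from uniform convexity and completeness, and the equivalence of Birkhoff--James orthogonality with $[n,u]=0$ from continuity and smoothness — since these are precisely the points at which the two structural hypotheses on $\mathcal{X}$ are consumed.
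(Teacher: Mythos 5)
The paper does not prove this statement: it is imported verbatim from Giles's paper on semi-inner-product spaces and used as a black box, so there is no internal proof to compare against. Judged on its own terms, your argument is correct, and it is in fact essentially Giles's original proof: reduce to $f\neq 0$, take the metric projection of a point onto the closed hyperplane $N=\ker f$ (existence and uniqueness of the nearest point being exactly where completeness and uniform convexity are spent), observe that the residual $u$ is Birkhoff--James orthogonal to $N$, and use continuity/smoothness to convert that into $[n,u]=0$ for all $n\in N$, after which the representer $y=\overline{f(u)}\,u/\|u\|^2$ is forced by properties (ii)--(iv). All of that is sound, including the conjugation in the definition of $y$ needed to absorb property (iii).

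The only place I would ask for one more line is the uniqueness step. From $\|y\|^2=[y,y']$ and $\|y'\|^2=[y',y]$ you correctly get $\|y\|=\|y'\|$ and equality in Cauchy--Schwarz, but ``strict convexity forces $y=y'$'' deserves justification: in a general semi-inner-product space equality in (v) does not by itself give linear dependence. The clean way is to note that $x\mapsto [x,y']$ is a support functional of norm $\|y'\|$ attaining its norm at both $y'/\|y'\|$ and $y/\|y\|$, and a norm-one functional on a strictly convex space attains its norm at no more than one point of the unit sphere (equivalently, invoke Giles's characterisation that strict convexity is equivalent to: $[x,y]=\|x\|\,\|y\|$ with $x,y\neq 0$ implies $y=\lambda x$ with $\lambda>0$); together with $\|y\|=\|y'\|$ this gives $y=y'$. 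The degenerate case $y=0$ is handled by testing at $x=y'$. With that sentence added, the proof is complete.
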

Theorem \ref{RIESZ} leads to the notion of generalized adjoint  whose existence is assured in the following theorem.
\begin{theorem}\cite{KOEHLER}\label{THEOREMK}
Let $\mathcal{X}$ be a complete homogeneous semi-inner product space. If $\mathcal{X}$ is continuous and uniformly convex, then for every bounded linear operator  $A:  \mathcal{X}\to \mathcal{X}$, there exists a unique map $A^\dagger:\mathcal{X}\to \mathcal{X}$,  which may not be linear or continuous (called as generalized adjoint of $A$)   such that 
\begin{align*}
[Ax,y]=[x,A^\dagger y],\quad \forall x,y \in \mathcal{X}.
\end{align*}
Moreover, the following statements hold.
\begin{enumerate}[\upshape(i)]
	\item $(\lambda A)^\dagger=\overline{\lambda}A^\dagger$, for all $\lambda \in \mathbb{K}$.
	\item $A^\dagger$ is injective if and only if $\overline{ A(\mathcal{X})}=\mathcal{X}$.
	\item If the norm of $\mathcal{X}$ is strongly (Frechet) differentiable, then $A^\dagger$ is continuous. 
 \end{enumerate}	
\end{theorem}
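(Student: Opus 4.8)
The plan is to treat the existence and uniqueness of $A^\dagger$ as a pointwise application of the Riesz representation theorem (Theorem \ref{RIESZ}), and then to read off the three properties from the uniqueness of the Riesz representative together with the algebraic axioms (i)--(v). First I would fix $y \in \mathcal{X}$ and consider the functional $f_y : \mathcal{X} \to \mathbb{K}$, $f_y(x) := [Ax, y]$. Using additivity and homogeneity of $[\cdot,\cdot]$ in the first slot (axioms (ii) and (iv)) together with linearity of $A$, the map $f_y$ is linear; using the Cauchy--Schwarz inequality (axiom (v)) and boundedness of $A$ one gets $|f_y(x)| \le \|Ax\|\,\|y\| \le \|A\|\,\|y\|\,\|x\|$, so $f_y$ is bounded. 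Theorem \ref{RIESZ} then yields a unique $z \in \mathcal{X}$ with $f_y(x) = [x,z]$ for all $x$, and I would define $A^\dagger y := z$. Because $z$ is the unique Riesz representative of $f_y$, the resulting map $A^\dagger$ is the only map satisfying $[Ax,y]=[x,A^\dagger y]$ for all $x,y$; this simultaneously gives existence and uniqueness. Note that $A^\dagger$ inherits no additivity, since $y \mapsto [\cdot,y]$ need not be additive, which is exactly why $A^\dagger$ may fail to be linear.

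For (i) I would not compute $(\lambda A)^\dagger$ directly but instead verify that $\overline{\lambda}A^\dagger y$ represents the functional $x \mapsto [(\lambda A)x, y]$: indeed $[x, \overline{\lambda}A^\dagger y] = \lambda[x,A^\dagger y] = \lambda[Ax,y] = [\lambda Ax, y]$ by axioms (iii) and (ii), and then uniqueness of the representative forces $(\lambda A)^\dagger y = \overline{\lambda}A^\dagger y$. For (ii) the governing observation is that, since each functional $[\cdot,y]$ is bounded and linear hence continuous, one has the chain $A^\dagger y_1 = A^\dagger y_2 \iff [x,A^\dagger y_1]=[x,A^\dagger y_2]\ \forall x \iff [w,y_1]=[w,y_2]\ \forall w \in \overline{A(\mathcal{X})}$, where the first equivalence uses injectivity of the Riesz map $y \mapsto [\cdot,y]$ (a direct consequence of uniqueness in Theorem \ref{RIESZ}) and the second uses continuity of $[\cdot,y_1],[\cdot,y_2]$ to pass to the closure. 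If $\overline{A(\mathcal{X})}=\mathcal{X}$ this says $[\cdot,y_1]=[\cdot,y_2]$ on all of $\mathcal{X}$, whence $y_1=y_2$ by injectivity of the Riesz map, so $A^\dagger$ is injective. Conversely, if $\overline{A(\mathcal{X})}$ is a proper closed subspace, Hahn--Banach furnishes a nonzero bounded functional vanishing on it, and Theorem \ref{RIESZ} represents it as $[\cdot, y_0]$ with $y_0 \neq 0$; then $[w, y_0]=0=[w,0]$ for all $w \in \overline{A(\mathcal{X})}$, giving $A^\dagger y_0 = A^\dagger 0 = 0$ with $y_0 \ne 0$, so $A^\dagger$ is not injective.

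The substantive analytic work is (iii), and here I would factor $A^\dagger$ through the Banach-space adjoint. Writing $J : \mathcal{X} \to \mathcal{X}^*$ for the (conjugate-homogeneous) Riesz map $Jy = [\cdot,y]$, a bijection by Theorem \ref{RIESZ}, and $A^* : \mathcal{X}^* \to \mathcal{X}^*$ for the usual bounded adjoint $A^*g = g\circ A$, a short check against the defining relation gives the identity $A^\dagger = J^{-1}\circ A^* \circ J$. Since $A^*$ is bounded linear, hence continuous, it remains to see that both $J$ and $J^{-1}$ are norm-to-norm continuous, after which $A^\dagger$ is continuous as a composition. This is precisely where the differentiability hypothesis enters: Fr\'echet differentiability of the norm of $\mathcal{X}$ makes the duality map $J$ norm-to-norm continuous (by the \v{S}mulian characterization of Fr\'echet differentiability through the duality map), while uniform convexity of $\mathcal{X}$ renders $\mathcal{X}^*$ uniformly smooth, so the dual duality map, which under reflexivity coincides with $J^{-1}$, is likewise norm continuous. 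I expect this step to be the main obstacle, since it rests on the interplay between the smoothness of $\mathcal{X}$ and of $\mathcal{X}^*$ rather than on the algebraic axioms used in the earlier parts; the remaining assertions are essentially bookkeeping around the uniqueness clause of Theorem \ref{RIESZ}.
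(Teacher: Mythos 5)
The paper does not prove this statement at all: Theorem \ref{THEOREMK} is quoted verbatim from Koehler's work (the citation \cite{KOEHLER}) and used as a black box, so there is no in-paper argument to compare against. Judged on its own, your reconstruction is correct and follows the standard route. The pointwise application of Theorem \ref{RIESZ} to $x \mapsto [Ax,y]$ is exactly how the generalized adjoint is defined, and your derivations of (i) and (ii) from the uniqueness clause (including the Hahn--Banach step for the converse direction of (ii) and the observation $A^\dagger 0 = 0$) are complete. For (iii), the factorization $A^\dagger = J^{-1}\circ A^*\circ J$ through the duality map is a clean way to isolate where Fr\'echet differentiability enters; it is essentially equivalent to Giles's original route, which proves that strong differentiability of the norm makes $y \mapsto [\cdot,y]$ norm-to-norm continuous and deduces continuity of $A^\dagger$ from there. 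The only caveat is that this part silently imports several nontrivial Banach-space facts --- \v{S}mulian's criterion, the Lindenstrauss duality between uniform convexity of $\mathcal{X}$ and uniform smoothness of $\mathcal{X}^*$, Milman--Pettis reflexivity for the identification $\mathcal{X}^{**}=\mathcal{X}$, and the identification of $J^{-1}$ with the duality map of $\mathcal{X}^*$ (which requires checking that $[\cdot,y]$ is the normalized support functional at $y$, i.e.\ $\|[\cdot,y]\|=\|y\|$ and $[y,y]=\|y\|^2$) --- each of which is true but should be stated and cited rather than gestured at if this were to stand as a self-contained proof.
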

We next recall the notion of frame for Banach spaces. There are seven different notions of frames for Banach spaces namely, Banach frames with respect to BK-spaces  (see \cite{GROCHENIG, CASAZZAHANLARSON}), frames in a Banach space with respect to a model space of sequences (see \cite{TEREKHIN,  TEREKHIN2}), projection frames (see \cite{TEREKHIN}), p-frames (see \cite{CHRISTENSENPSTOEVA, ALDROUBI}),  $\mathcal{X}_d$-frames (see \cite{CASAZZACHRISTENSENSTOEVA, STOEVA}), cb-frames (see \cite{LIURUAN}) and  Schauder frames or framing (see \cite{CASAZZAHANLARSON, HANLARSONLIU}).   Schauder frames are more flexible and are generalized to approximate Schauder frames in \cite{FREEMANODELL, THOMAS}.
\begin{definition}\cite{FREEMANODELL, THOMAS}\label{ASFDEF}
	Let $\{\tau_n\}_n$ be a sequence in  $\mathcal{X}$ and 	$\{f_n\}_n$ be a sequence in  $\mathcal{X}^*.$ The pair $ (\{f_n \}_{n}, \{\tau_n \}_{n}) $ is said to be an approximate Schauder frame (ASF) for $\mathcal{X}$ if 
	\begin{align*}
		S_{f, \tau}:\mathcal{X}\ni x \mapsto S_{f, \tau}x\coloneqq \sum_{n=1}^\infty
	f_n(x)\tau_n \in
	\mathcal{X}
	\end{align*}
	is a well-defined bounded linear, invertible operator.
\end{definition} 
A stronger form of Definition \ref{ASFDEF} allowing to switch between $\mathcal{X}$ and $\ell^p(\mathbb{N})$ has been studied recently in \cite{MAHESHJOHNSON} and the definition is  given below. 
  \begin{definition}\cite{MAHESHJOHNSON}\label{PASFDEF}
  An ASF $ (\{f_n \}_{n}, \{\tau_n \}_{n}) $  for $\mathcal{X}$	is said to be p-ASF, $p \in [1, \infty)$ if both the maps 
  \begin{align}\label{ALIGNP}
  &\theta_f: \mathcal{X}\ni x \mapsto \theta_f x\coloneqq \{f_n(x)\}_n \in \ell^p(\mathbb{N}) \text{ and } \\
  &\theta_\tau : \ell^p(\mathbb{N}) \ni \{a_n\}_n \mapsto \theta_\tau \{a_n\}_n\coloneqq \sum_{n=1}^\infty a_n\tau_n \in \mathcal{X}\label{AAA}
  \end{align}
  are well-defined bounded linear operators. A p-ASF  is said to be a  Parseval p-ASF if  $	S_{f, \tau}=I_ \mathcal{X}$, the identity operator on $\mathcal{X}$.
  \end{definition}
  Note that, in terms of inequalities, (\ref{ALIGNP}) and (\ref{AAA}) say that there exist constants $c,d>0$, such that 
  \begin{align}\label{P1}
  &\left(\sum_{n=1}^\infty
  |f_n(x)|^p\right)^\frac{1}{p}\leq c \|x\|, \quad \forall x \in \mathcal{X} \text{ and } \\
  &\left\|\sum_{n=1}^\infty a_n\tau_n\right\|\leq d \left(\sum_{n=1}^\infty
  |a_n|^p\right)^\frac{1}{p}, \quad \forall \{a_n\}_n  \in \ell^p(\mathbb{N}).\label{P2}
  \end{align}
  
 \section{New identity for frames for Banach spaces}\label{SECTIONTWO}
   Throughout this paper we assume that  $\mathcal{X}$ is a continuous, uniformly convex, homogeneous semi-inner product space. Let $ (\{f_n \}_{n}, \{\tau_n \}_{n}) $ be a frame  for $\mathcal{X}$. Theorem \ref{RIESZ} says that each $f_n$ can be identified with unique $\omega_n \in \mathcal{X}$  satisfying $f_n(x)=[x,\omega_n]$, for all $x \in \mathcal{X}$. Note that 
   \begin{align*}
   \sum_{n=1}^{\infty}[x, (S_{\omega, \tau}^{-1})^\dagger\omega_n]S_{\omega, \tau}^{-1}\tau_n=S_{\omega, \tau}^{-1}\left(\sum_{n=1}^{\infty}[ S_{\omega, \tau}^{-1}x, \omega_n]\tau_n\right)=S_{\omega, \tau}^{-1}x, \quad \forall x \in \mathcal{X}.
   \end{align*}
   Hence $ (\{\tilde{\omega}_n\coloneqq(S_{\omega, \tau}^{-1})^\dagger\omega_n \}_{n}, \{\tilde{\tau}_n\coloneqq S_{\omega, \tau}^{-1}\tau_n \}_{n}) $ is a p-ASF for $\mathcal{X}$ which is called as the canonical dual frame for $ (\{\omega_n \}_{n}, \{\tau_n \}_{n}) $. 
    Proposition 2.2 in  \cite{BALACASAZZAEDIDINKUTYNIOK} tells that if operators $U, V:\mathcal{H} \to \mathcal{H}$ satisfy $U+V=I_\mathcal{H}$, then $U-V=U^2-V^2$. The result remains valid for Banach spaces as shown in the following lemma. 
    \begin{lemma}\label{LEMMA}
   	If operators $U, V:\mathcal{X} \to \mathcal{X}$ satisfy $U+V=I_\mathcal{X}$, then $U-V=U^2-V^2$.
   \end{lemma}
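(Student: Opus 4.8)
The plan is to treat this as a purely algebraic identity in the (generally noncommutative) algebra of linear operators on $\mathcal{X}$, exploiting the single fact that the identity operator $I_\mathcal{X}$ is central, that is, it commutes with every operator. First I would use the hypothesis $U+V=I_\mathcal{X}$ to eliminate $V$, writing $V = I_\mathcal{X} - U$, so that both sides of the claimed identity become expressions in $U$ alone.

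Substituting into the right-hand side, I would expand $U^2 - V^2 = U^2 - (I_\mathcal{X} - U)^2$. Since $I_\mathcal{X}$ commutes with $U$, the square expands cleanly as $(I_\mathcal{X} - U)^2 = I_\mathcal{X} - 2U + U^2$, whence $U^2 - V^2 = U^2 - I_\mathcal{X} + 2U - U^2 = 2U - I_\mathcal{X}$. On the left-hand side, $U - V = U - (I_\mathcal{X} - U) = 2U - I_\mathcal{X}$. Comparing the two expressions yields $U - V = U^2 - V^2$, as required.

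The only point that requires attention — and the reason the naive factorization $U^2 - V^2 = (U-V)(U+V)$ must be avoided — is that $U$ and $V$ need not commute, so the usual difference-of-squares manipulation is not available. The argument above sidesteps this entirely, because the only commutation it invokes is that of $I_\mathcal{X}$ with $U$, which always holds. Equivalently, and without any substitution, one may multiply the hypothesis on the appropriate side: from $U+V=I_\mathcal{X}$ one gets $U = U(U+V) = U^2 + UV$ and $V = (U+V)V = UV + V^2$, whence $U - V = (U^2 + UV) - (UV + V^2) = U^2 - V^2$. I expect no genuine obstacle here; the content of the lemma is simply that the classical Hilbert-space identity of Balan, Casazza, Edidin and Kutyniok (Proposition 2.2 in \cite{BALACASAZZAEDIDINKUTYNIOK}) is algebraic in nature, and therefore transfers verbatim to operators on $\mathcal{X}$ with no use of the semi-inner product structure.
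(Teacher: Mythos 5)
Your proof is correct and follows essentially the same route as the paper: eliminate $V$ via $V=I_\mathcal{X}-U$, expand $(I_\mathcal{X}-U)^2$ using only the centrality of the identity operator, and observe that both sides reduce to $2U-I_\mathcal{X}$. Your remarks about avoiding the noncommutative difference-of-squares factorization and the alternative one-sided multiplication argument are sound but add nothing beyond the paper's computation.
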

\begin{proof}
	We 	imitate the proof of Proposition 2.2 in \cite{BALACASAZZAEDIDINKUTYNIOK}: 
	\begin{align*}
	U-V=U-(I_\mathcal{X}-U)=2U-I_\mathcal{X}=U^2-(I_\mathcal{X}-2U+U^2)=U^2-(I_\mathcal{X}-U)^2=U^2-V^2.
	\end{align*}
\end{proof}
 We are now ready to prove Banach space version of the frame identity (Theorem \ref{CASAZZAGENERAL}).
  \begin{theorem}\label{OURGENERAL}
  Let $ (\{\omega_n \}_{n}, \{\tau_n \}_{n}) $  be a p-ASF for $\mathcal{X}$. 	Then for every $\mathbb{M} \subseteq \mathbb{N}$, 	
  	 \begin{align*}
  	\sum_{n\in \mathbb{M}}[x, \omega_n][\tau_n,x]-\sum_{n=1}^{\infty}[S_\mathbb{M}x,\tilde{\omega}_n][\tilde{\tau}_n,S_\mathbb{M}^\dagger x]=\sum_{n\in \mathbb{M}^\text{c}}[x, \omega_n][\tau_n,x]-\sum_{n=1}^{\infty}[S_{\mathbb{M}^\text{c}}x,\tilde{\omega}_n][\tilde{\tau}_n,S_{\mathbb{M}^\text{c}}^\dagger x] , \quad \forall x \in \mathcal{X}.
  	\end{align*}
  \end{theorem}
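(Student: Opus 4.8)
The plan is to collapse each of the four series into a semi-inner product of a single vector against $x$, thereby turning the claimed scalar identity into an operator identity that is then settled by Lemma~\ref{LEMMA}.

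First I would handle the two ``diagonal'' sums. Since $[x,\omega_n]=f_n(x)$ is a scalar, homogeneity and additivity of $[\cdot,\cdot]$ in the first slot give
\[
\sum_{n\in\mathbb{M}}[x,\omega_n][\tau_n,x]=\Bigl[\sum_{n\in\mathbb{M}}[x,\omega_n]\tau_n,\,x\Bigr]=[S_\mathbb{M}x,x],
\]
and likewise $\sum_{n\in\mathbb{M}^\text{c}}[x,\omega_n][\tau_n,x]=[S_{\mathbb{M}^\text{c}}x,x]$. For the two ``dual'' sums I would pull the scalar $[S_\mathbb{M}x,\tilde\omega_n]$ into the first argument of $[\tilde\tau_n,S_\mathbb{M}^\dagger x]$, interchange the infinite sum with $[\cdot,\,S_\mathbb{M}^\dagger x]$ (legitimate because $[\cdot,y]$ is a bounded linear functional by the Cauchy--Schwarz inequality), and apply the canonical dual reconstruction formula $\sum_n[y,\tilde\omega_n]\tilde\tau_n=S_{\omega,\tau}^{-1}y$ derived just before Lemma~\ref{LEMMA}. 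This gives
\[
\sum_{n=1}^{\infty}[S_\mathbb{M}x,\tilde\omega_n][\tilde\tau_n,S_\mathbb{M}^\dagger x]=[S_{\omega,\tau}^{-1}S_\mathbb{M}x,\,S_\mathbb{M}^\dagger x]=[S_\mathbb{M}S_{\omega,\tau}^{-1}S_\mathbb{M}x,\,x],
\]
the last step being the defining relation $[S_\mathbb{M}w,v]=[w,S_\mathbb{M}^\dagger v]$ of the generalized adjoint from Theorem~\ref{THEOREMK}; the same computation with $\mathbb{M}^\text{c}$ produces $[S_{\mathbb{M}^\text{c}}S_{\omega,\tau}^{-1}S_{\mathbb{M}^\text{c}}x,\,x]$.

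After these reductions, additivity of $[\cdot,x]$ shows that the asserted identity is equivalent to
\[
[(S_\mathbb{M}-S_\mathbb{M}S_{\omega,\tau}^{-1}S_\mathbb{M})x,x]=[(S_{\mathbb{M}^\text{c}}-S_{\mathbb{M}^\text{c}}S_{\omega,\tau}^{-1}S_{\mathbb{M}^\text{c}})x,x],
\]
so it suffices to establish the operator identity $S_\mathbb{M}-S_\mathbb{M}S_{\omega,\tau}^{-1}S_\mathbb{M}=S_{\mathbb{M}^\text{c}}-S_{\mathbb{M}^\text{c}}S_{\omega,\tau}^{-1}S_{\mathbb{M}^\text{c}}$. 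I would prove this by setting $U\coloneqq S_{\omega,\tau}^{-1}S_\mathbb{M}$ and $V\coloneqq S_{\omega,\tau}^{-1}S_{\mathbb{M}^\text{c}}$; because $S_\mathbb{M}+S_{\mathbb{M}^\text{c}}=S_{\omega,\tau}$ we have $U+V=I_\mathcal{X}$, so Lemma~\ref{LEMMA} yields $U-V=U^2-V^2$, and multiplying this relation on the left by $S_{\omega,\tau}$ and rearranging reproduces exactly the desired operator identity.

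The step I expect to require the most care is the manipulation of the generalized adjoint $S_\mathbb{M}^\dagger$, which by Theorem~\ref{THEOREMK} may be neither linear nor continuous: the argument must invoke it only through $[S_\mathbb{M}w,v]=[w,S_\mathbb{M}^\dagger v]$ and never distribute it over sums or scalars. A secondary point to verify is that each $S_\mathbb{M}$ is a bounded linear operator on $\mathcal{X}$ (so that Theorem~\ref{THEOREMK} applies and $S_\mathbb{M}^\dagger$ exists); this follows from (\ref{P1})--(\ref{P2}), as $S_\mathbb{M}$ factors as a bounded analysis map into $\ell^p(\mathbb{N})$, a coordinate projection onto the indices in $\mathbb{M}$, and a bounded synthesis map. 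Once these well-definedness issues are settled, the remaining content is purely the algebra supplied by Lemma~\ref{LEMMA}.
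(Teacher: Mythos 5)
Your proposal is correct and follows essentially the same route as the paper: both apply Lemma~\ref{LEMMA} to $U=S_{\omega,\tau}^{-1}S_\mathbb{M}$, $V=S_{\omega,\tau}^{-1}S_{\mathbb{M}^\text{c}}$ and use the canonical dual reconstruction $\sum_n[y,\tilde{\omega}_n][\tilde{\tau}_n,z]=[S_{\omega,\tau}^{-1}y,z]$ together with the defining relation of the generalized adjoint. The only cosmetic difference is that you left-multiply the resulting operator identity by $S_{\omega,\tau}$ before pairing against $x$, whereas the paper pairs the unmultiplied identity against $S_{\omega,\tau}^\dagger x$; these are the same step.
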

  \begin{proof}
 For notational convenience, we denote $S_{f, \tau}$ by $S$. We have $S_\mathbb{M}+S_{\mathbb{M}^\text{c}}=S$. Using 	$S^{-1}S_\mathbb{M}+S^{-1}S_{\mathbb{M}^\text{c}}=I_\mathcal{X}$ and Lemma \ref{LEMMA} we get $S^{-1}S_\mathbb{M}-S^{-1}S_{\mathbb{M}^\text{c}}=(S^{-1}S_\mathbb{M})^2-(S^{-1}S_{\mathbb{M}^\text{c}})^2=S^{-1}S_\mathbb{M}S^{-1}S_\mathbb{M}-S^{-1}S_{\mathbb{M}^\text{c}}S^{-1}S_{\mathbb{M}^\text{c}}$ which gives 
 \begin{align*}
 S^{-1}S_\mathbb{M}-S^{-1}S_\mathbb{M}S^{-1}S_\mathbb{M}=S^{-1}S_{\mathbb{M}^\text{c}}-S^{-1}S_{\mathbb{M}^\text{c}}S^{-1}S_{\mathbb{M}^\text{c}}.
 \end{align*}
 Therefore for all $x, y \in \mathcal{X}$,
  \begin{align*}
 [S^{-1}S_\mathbb{M}x,y]-[S^{-1}S_\mathbb{M}S^{-1}S_\mathbb{M}x, y]=[S^{-1}S_{\mathbb{M}^\text{c}}x,y]-[S^{-1}S_{\mathbb{M}^\text{c}}S^{-1}S_{\mathbb{M}^\text{c}}x,y].
 \end{align*}
 In particular, 
 \begin{align*}
 [S^{-1}S_\mathbb{M}x,S^\dagger x]-[S^{-1}S_\mathbb{M}S^{-1}S_\mathbb{M}x, S^\dagger x]=[S^{-1}S_{\mathbb{M}^\text{c}}x,S^\dagger x]-[S^{-1}S_{\mathbb{M}^\text{c}}S^{-1}S_{\mathbb{M}^\text{c}}x,S^\dagger x], \quad \forall x \in \mathcal{X}
 \end{align*}
 which gives 
 \begin{align}\label{FIRST}
 [S_\mathbb{M}x,x]-[S^{-1}S_\mathbb{M}x, S_\mathbb{M}^\dagger x]=[S_{\mathbb{M}^\text{c}}x,x]-[S^{-1}S_{\mathbb{M}^\text{c}}x,S_{\mathbb{M}^\text{c}}^\dagger x], \quad \forall x \in \mathcal{X}.
 \end{align}
 Note that 
 \begin{align*}
 \sum_{n=1}^{\infty}[x,\tilde{\omega}_n][\tilde{\tau}_n,y]&=\sum_{n=1}^{\infty}[x,(S^{-1})^\dagger\omega_n][S^{-1}\tau_n,y]=\sum_{n=1}^{\infty}[S^{-1}x, \omega_n][S^{-1}\tau_n,y]\\
 &=\left[\sum_{n=1}^{\infty}[S^{-1}x, \omega_n]S^{-1}\tau_n,y \right]=\left[S^{-1}\left(\sum_{n=1}^{\infty}[S^{-1}x, \omega_n]\tau_n\right),y \right]=[S^{-1}x,y], \quad \forall x,y \in \mathcal{X}.
 \end{align*}
 Equation (\ref{FIRST}) now gives 
 \begin{align*}
\sum_{n\in \mathbb{M}}[x, \omega_n][\tau_n,x]-\sum_{n=1}^{\infty}[S_\mathbb{M}x,\tilde{\omega}_n][\tilde{\tau}_n,S_\mathbb{M}^\dagger x]=\sum_{n\in \mathbb{M}^\text{c}}[x, \omega_n][\tau_n,x]-\sum_{n=1}^{\infty}[S_{\mathbb{M}^\text{c}}x,\tilde{\omega}_n][\tilde{\tau}_n,S_{\mathbb{M}^\text{c}}^\dagger x] , \quad \forall x \in \mathcal{X}.
 \end{align*}
  \end{proof}

In the next theorem, we prove Banach space version of the Parseval frame identity (Theorem \ref{SECOND}).
\begin{theorem} (Parseval p-ASF identity) \label{PASFID}
  Let $ (\{\omega_n \}_{n}, \{\tau_n \}_{n}) $  be a Parseval p-ASF for $\mathcal{X}$. 	Then for every $\mathbb{M} \subseteq \mathbb{N}$,
  \begin{align*}
  \sum_{n\in \mathbb{M}}[x, \omega_n][\tau_n,x]-\sum_{n\in \mathbb{M}}\sum_{k\in \mathbb{M}}[x, \omega_n][\tau_n,\omega_k][\tau_k,x]=\sum_{n\in \mathbb{M}^\text{c}}[x, \omega_n][\tau_n,x]-\sum_{n\in \mathbb{M}^\text{c}}\sum_{k\in \mathbb{M}^\text{c}}[x, \omega_n][\tau_n,\omega_k][\tau_k,x], \quad \forall x \in \mathcal{X}.
  \end{align*}	
  \end{theorem}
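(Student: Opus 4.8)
The plan is to obtain the statement as the Parseval specialization of Theorem \ref{OURGENERAL}, but I expect the computation to collapse so cleanly that the generalized adjoints disappear entirely, leaving a short argument built directly on Lemma \ref{LEMMA}. Since the frame is Parseval, $S_{f,\tau}=I_\mathcal{X}$, and therefore $S_\mathbb{M}+S_{\mathbb{M}^\text{c}}=S_{f,\tau}=I_\mathcal{X}$. Applying Lemma \ref{LEMMA} with $U=S_\mathbb{M}$ and $V=S_{\mathbb{M}^\text{c}}$ gives the operator identity $S_\mathbb{M}-S_{\mathbb{M}^\text{c}}=S_\mathbb{M}^2-S_{\mathbb{M}^\text{c}}^2$, which I would rearrange as $S_\mathbb{M}-S_\mathbb{M}^2=S_{\mathbb{M}^\text{c}}-S_{\mathbb{M}^\text{c}}^2$.

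Next I would pair both sides of this operator identity against $x$ in the semi-inner product. Using additivity in the first slot (property (iv)), this yields $[S_\mathbb{M}x,x]-[S_\mathbb{M}^2x,x]=[S_{\mathbb{M}^\text{c}}x,x]-[S_{\mathbb{M}^\text{c}}^2x,x]$ for every $x\in\mathcal{X}$. It remains to expand each of the four terms. For the linear terms, $[S_\mathbb{M}x,x]=\big[\sum_{n\in\mathbb{M}}[x,\omega_n]\tau_n,\,x\big]=\sum_{n\in\mathbb{M}}[x,\omega_n][\tau_n,x]$, where I pull the sum out of the first argument using linearity in the first slot together with continuity of the functional $[\,\cdot\,,x]$ (convergence of the series being guaranteed by the synthesis bound \eqref{P2}); and similarly for $\mathbb{M}^\text{c}$. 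For the quadratic terms I would first write $S_\mathbb{M}^2x=S_\mathbb{M}(S_\mathbb{M}x)=\sum_{n\in\mathbb{M}}[S_\mathbb{M}x,\omega_n]\tau_n$ and then expand the inner coefficient as $[S_\mathbb{M}x,\omega_n]=\sum_{k\in\mathbb{M}}[x,\omega_k][\tau_k,\omega_n]$, giving $[S_\mathbb{M}^2x,x]=\sum_{n\in\mathbb{M}}\sum_{k\in\mathbb{M}}[x,\omega_k][\tau_k,\omega_n][\tau_n,x]$; relabelling the dummy indices $n\leftrightarrow k$ turns this into exactly $\sum_{n\in\mathbb{M}}\sum_{k\in\mathbb{M}}[x,\omega_n][\tau_n,\omega_k][\tau_k,x]$, the term in the statement, and the same for $\mathbb{M}^\text{c}$. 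Substituting these four expansions into the paired identity gives the claim.

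I do not expect a serious obstacle here; the work is essentially bookkeeping. The two points that need genuine care are, first, the justification for moving the (possibly infinite) summations through the semi-inner product — this uses that $[\,\cdot\,,y]$ is a bounded linear functional for each fixed $y$ and that the relevant series converge, both supplied by the p-ASF bounds \eqref{P1}--\eqref{P2} — and second, keeping the repeated-index expansion of $S_\mathbb{M}^2$ straight so that the final relabelling matches the stated double sum. It is worth noting that, unlike Theorem \ref{OURGENERAL}, this Parseval identity requires no generalized adjoint: because $S_{f,\tau}^{-1}=I_\mathcal{X}$ collapses the dual frame to $\tilde\omega_n=\omega_n$ and $\tilde\tau_n=\tau_n$, the adjoint term $\sum_{n=1}^\infty[S_\mathbb{M}x,\tilde\omega_n][\tilde\tau_n,S_\mathbb{M}^\dagger x]$ of Theorem \ref{OURGENERAL} reduces, via the Parseval reconstruction $\sum_n[a,\omega_n][\tau_n,b]=[a,b]$ and the defining adjoint relation, precisely to $[S_\mathbb{M}^2x,x]$, confirming that the direct route above and the specialization route agree.
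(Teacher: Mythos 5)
Your proof is correct, but it takes a genuinely different route from the paper's. The paper derives Theorem \ref{PASFID} as a consequence of Theorem \ref{OURGENERAL}: it rewrites the double sum $\sum_{n\in \mathbb{M}}\sum_{k\in \mathbb{M}}[x, \omega_n][\tau_n,\omega_k][\tau_k,x]$ as $[S_\mathbb{M}S_\mathbb{M}x,x]=[S_\mathbb{M}x,S_\mathbb{M}^\dagger x]=\sum_{n=1}^{\infty}[S_\mathbb{M}x,\omega_n][\tau_n,S_\mathbb{M}^\dagger x]$, which (since the canonical dual of a Parseval p-ASF is itself) is exactly the term appearing in Theorem \ref{OURGENERAL}, applies that theorem, and then reverses the manipulations on the $\mathbb{M}^\text{c}$ side. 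You instead apply Lemma \ref{LEMMA} directly to $S_\mathbb{M}+S_{\mathbb{M}^\text{c}}=S_{f,\tau}=I_\mathcal{X}$, obtaining the operator identity $S_\mathbb{M}-S_\mathbb{M}^2=S_{\mathbb{M}^\text{c}}-S_{\mathbb{M}^\text{c}}^2$ first, and only then pair with $x$ and expand; this bypasses Theorem \ref{OURGENERAL} and the generalized adjoint $S_\mathbb{M}^\dagger$ entirely. What the paper's route buys is the presentation of the Parseval identity as a specialization of the general frame identity, mirroring the Hilbert-space development in \cite{BALACASAZZAEDIDINKUTYNIOK}. What your route buys is economy and robustness: you never invoke $S_\mathbb{M}^\dagger$, which by Theorem \ref{THEOREMK} need not be linear or continuous, and you establish the operator form (\ref{OPERATORDESCRPTION}) as a genuine intermediate step rather than as an after-the-fact reformulation --- note that passing from the scalar identity back to the operator identity is not automatic in a semi-inner product space, so your direction (operator identity first, then pair with $x$) is the cleaner one, and it is the operator form that is actually used in the proof of Theorem \ref{LOWER234}. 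Your closing remark that the adjoint term of Theorem \ref{OURGENERAL} collapses to $[S_\mathbb{M}^2x,x]$ in the Parseval case is precisely the computation the paper performs, so the two routes are reconciled exactly as you describe.
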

  \begin{proof}
 Using  	Theorem \ref{OURGENERAL}, for all $ x \in \mathcal{X}$,
 
 \begin{align*}
 &\sum_{n\in \mathbb{M}}[x, \omega_n][\tau_n,x]-\sum_{n\in \mathbb{M}}\sum_{k\in \mathbb{M}}[x, \omega_n][\tau_n,\omega_k][\tau_k,x]= \sum_{n\in \mathbb{M}}[x, \omega_n][\tau_n,x]-\left[\sum_{n\in \mathbb{M}}[x, \omega_n]\sum_{k\in \mathbb{M}}[\tau_n,\omega_k]\tau_k, x\right]\\
 &=\sum_{n\in \mathbb{M}}[x, \omega_n][\tau_n,x]-\left[\sum_{n\in \mathbb{M}}[x, \omega_n]S_\mathbb{M}\tau_n, x\right]=\sum_{n\in \mathbb{M}}[x, \omega_n][\tau_n,x]-\left[S_\mathbb{M}\left(\sum_{n\in \mathbb{M}}[x, \omega_n]\tau_n\right), x\right]\\
 &=\sum_{n\in \mathbb{M}}[x, \omega_n][\tau_n,x]-\left[S_\mathbb{M} S_\mathbb{M}x, x\right]=\sum_{n\in \mathbb{M}}[x, \omega_n][\tau_n,x]-\left[ S_\mathbb{M}x, S_\mathbb{M}^\dagger x\right]\\
 &=\sum_{n\in \mathbb{M}}[x, \omega_n][\tau_n,x]-\left[\sum_{n=1}^{\infty}[ S_\mathbb{M}x, \omega_n]\tau_n,S_\mathbb{M}^\dagger x\right]=\sum_{n\in \mathbb{M}}[x, \omega_n][\tau_n,x]-\sum_{n=1}^{\infty}[ S_\mathbb{M}x, \omega_n][\tau_n,S_\mathbb{M}^\dagger x]\\
 &=\sum_{n\in \mathbb{M}^\text{c}}[x, \omega_n][\tau_n,x]-\sum_{n=1}^{\infty}[S_{\mathbb{M}^\text{c}}x,\omega_n][\tau_n,S_{\mathbb{M}^\text{c}}^\dagger x]
 =\sum_{n\in \mathbb{M}^\text{c}}[x, \omega_n][\tau_n,x]-\sum_{n\in \mathbb{M}^\text{c}}\sum_{k\in \mathbb{M}^\text{c}}[x, \omega_n][\tau_n,\omega_k][\tau_k,x].
 \end{align*}
  \end{proof}
In terms of $S_\mathbb{M}$ and $S_\mathbb{M}^\text{c}$, Theorem \ref{PASFID} can be written as 
\begin{align}\label{OPERATORDESCRPTION}
S_\mathbb{M}-S_\mathbb{M}^2=S_{\mathbb{M}^\text{c}}-S_{\mathbb{M}^\text{c}}^2\quad \text{ or } \quad S_\mathbb{M}+S_{\mathbb{M}^\text{c}}^2=S_{\mathbb{M}^\text{c}}+S_\mathbb{M}^2.
\end{align}
 We now give an application of Theorem \ref{PASFID} which is Banach space version of Theorem \ref{THIRD}. 
  \begin{theorem}\label{LOWER234}
  Let $ (\{\omega_n \}_{n}, \{\tau_n \}_{n}) $  be a Parseval p-ASF for $\mathcal{X}$. 	Let  $\mathbb{M} \subseteq \mathbb{N}$. If $x$ is in $\mathcal{X}$ such that $[(S_\mathbb{M}-\frac{1}{2}I_\mathcal{X})^2x, x]\geq 0$, then 
  \begin{align*}
 \sum_{n\in \mathbb{M}}[x, \omega_n][\tau_n,x]+\sum_{n\in \mathbb{M}^\text{c}}\sum_{k\in \mathbb{M}^\text{c}}[x, \omega_n][\tau_n,\omega_k][\tau_k,x]&=\sum_{n\in \mathbb{M}^\text{c}}[x, \omega_n][\tau_n,x]+\sum_{n\in \mathbb{M}}\sum_{k\in \mathbb{M}}[x, \omega_n][\tau_n,\omega_k][\tau_k,x]\\
 &\geq \frac{3}{4}\|x\|^2 , \quad \text{ for those } x.
  \end{align*}
  \end{theorem}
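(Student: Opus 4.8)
The plan is to recast every sum appearing in the statement as a semi-inner product involving the partial frame operator $S_\mathbb{M}$, and then to reduce the inequality to a completion of squares. First I would observe, exactly as in the computation carried out in the proof of Theorem \ref{PASFID}, that the single sums are $S_\mathbb{M}$ paired against $x$: since $S_\mathbb{M}x=\sum_{n\in\mathbb{M}}[x,\omega_n]\tau_n$, additivity and homogeneity of $[\cdot,\cdot]$ in the first slot give $\sum_{n\in\mathbb{M}}[x,\omega_n][\tau_n,x]=[S_\mathbb{M}x,x]$ and likewise $\sum_{n\in\mathbb{M}^\text{c}}[x,\omega_n][\tau_n,x]=[S_{\mathbb{M}^\text{c}}x,x]$. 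For the double sums, the inner index collapses via $\sum_{k\in\mathbb{M}}[\tau_n,\omega_k]\tau_k=S_\mathbb{M}\tau_n$, so that pulling $S_\mathbb{M}$ out by linearity yields $\sum_{n\in\mathbb{M}}\sum_{k\in\mathbb{M}}[x,\omega_n][\tau_n,\omega_k][\tau_k,x]=[S_\mathbb{M}^2x,x]$, and symmetrically the $\mathbb{M}^\text{c}$ double sum equals $[S_{\mathbb{M}^\text{c}}^2x,x]$. Thus the left-hand side of the asserted equality is $[S_\mathbb{M}x,x]+[S_{\mathbb{M}^\text{c}}^2x,x]$ and the right-hand side is $[S_{\mathbb{M}^\text{c}}x,x]+[S_\mathbb{M}^2x,x]$.

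With this dictionary in place, the equality is immediate: the operator identity $S_\mathbb{M}+S_{\mathbb{M}^\text{c}}^2=S_{\mathbb{M}^\text{c}}+S_\mathbb{M}^2$ recorded in (\ref{OPERATORDESCRPTION}) (a consequence of Lemma \ref{LEMMA}) gives, after applying $[\cdot,x]$ and using additivity in the first argument, $[S_\mathbb{M}x,x]+[S_{\mathbb{M}^\text{c}}^2x,x]=[S_{\mathbb{M}^\text{c}}x,x]+[S_\mathbb{M}^2x,x]$, which is precisely the claimed identity.

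For the lower bound I would exploit the Parseval hypothesis $S_{f,\tau}=I_\mathcal{X}$, which forces $S_\mathbb{M}+S_{\mathbb{M}^\text{c}}=I_\mathcal{X}$, hence $S_{\mathbb{M}^\text{c}}=I_\mathcal{X}-S_\mathbb{M}$ and $S_{\mathbb{M}^\text{c}}^2=I_\mathcal{X}-2S_\mathbb{M}+S_\mathbb{M}^2$. Substituting and distributing $[\cdot,x]$ over operator sums, the left-hand side becomes $\|x\|^2-[S_\mathbb{M}x,x]+[S_\mathbb{M}^2x,x]$. The key algebraic step is to complete the square at the operator level: since $(S_\mathbb{M}-\tfrac12 I_\mathcal{X})^2=S_\mathbb{M}^2-S_\mathbb{M}+\tfrac14 I_\mathcal{X}$, one obtains
\begin{align*}
[S_\mathbb{M}x,x]+[S_{\mathbb{M}^\text{c}}^2x,x]=\frac{3}{4}\|x\|^2+\left[\left(S_\mathbb{M}-\tfrac{1}{2}I_\mathcal{X}\right)^2x,\,x\right].
\end{align*}
The stated hypothesis is exactly that the last term is nonnegative, so the $\tfrac34\|x\|^2$ lower bound (and, a posteriori, the realness of the left-hand side) follows at once.

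I do not anticipate a serious obstacle here: because $[\cdot,\cdot]$ is linear and additive in its first argument, every operator identity valid in $\mathcal{X}$ transfers verbatim once one pairs against the fixed vector $x$ in the second slot, and no symmetry of the semi-inner product is ever invoked. The only point requiring care is to keep the fixed vector $x$ always in the second argument (where the form is merely conjugate-homogeneous) and to perform all distributive manipulations exclusively in the first slot; note also that the generalized adjoint plays no role in this statement, since only $S_\mathbb{M}$ and its square enter.
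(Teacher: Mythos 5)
Your proposal is correct and follows essentially the same route as the paper: both reduce all sums to the pairings $[S_\mathbb{M}x,x]$, $[S_\mathbb{M}^2x,x]$, $[S_{\mathbb{M}^\text{c}}^2x,x]$, invoke the operator identity (\ref{OPERATORDESCRPTION}), and complete the square to exhibit the term $[(S_\mathbb{M}-\frac{1}{2}I_\mathcal{X})^2x,x]$ controlled by the hypothesis. The only (cosmetic) difference is that you complete the square directly in $S_\mathbb{M}+S_{\mathbb{M}^\text{c}}^2=\frac{3}{4}I_\mathcal{X}+(S_\mathbb{M}-\frac{1}{2}I_\mathcal{X})^2$, whereas the paper first establishes $S_\mathbb{M}^2+S_{\mathbb{M}^\text{c}}^2=2(S_\mathbb{M}-\frac{1}{2}I_\mathcal{X})^2+\frac{1}{2}I_\mathcal{X}$ and then doubles the quantity of interest.
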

  \begin{proof}
  	We first compute 
\begin{align*}
 S_\mathbb{M}^2+S_{\mathbb{M}^\text{c}}^2 = S_\mathbb{M}^2+ (I_\mathcal{X}-S_\mathbb{M})^2=2 S_\mathbb{M}^2-2S_\mathbb{M}+I_\mathcal{X}
=2\left(S_\mathbb{M}-\frac{1}{2}I_\mathcal{X}\right)^2+\frac{1}{2}I_\mathcal{X}.
\end{align*} 
Hence if $x$ is in $\mathcal{X}$ satisfying $[(S_\mathbb{M}-\frac{1}{2}I_\mathcal{X})^2x, x]\geq 0$, then 
\begin{align*}
[( S_\mathbb{M}^2+S_{\mathbb{M}^\text{c}}^2)x,x]\geq \frac{1}{2}\|x\|^2.
\end{align*}
Now using Equation (\ref{OPERATORDESCRPTION}) we get 
\begin{align*}
&2\sum_{n\in \mathbb{M}}[x, \omega_n][\tau_n,x]+2\sum_{n\in \mathbb{M}^\text{c}}\sum_{k\in \mathbb{M}^\text{c}}[x, \omega_n][\tau_n,\omega_k][\tau_k,x]=2[S_\mathbb{M}x,x]+2[ S_{\mathbb{M}^\text{c}}^2x,x]\\
&=[2(S_\mathbb{M}+S_{\mathbb{M}^\text{c}}^2)x,x]=[((S_\mathbb{M}+S_{\mathbb{M}^\text{c}}^2)+(S_\mathbb{M}+S_{\mathbb{M}^\text{c}}^2))x,x]\\
&=[((S_\mathbb{M}+S_{\mathbb{M}^\text{c}}^2)+(S_{\mathbb{M}^\text{c}}+S_\mathbb{M}^2))x,x]=[(I_\mathcal{X}+S_{\mathbb{M}^\text{c}}^2+S_\mathbb{M}^2)x,x]\\
&=\|x\|^2 +[( S_\mathbb{M}^2+S_{\mathbb{M}^\text{c}}^2)x,x] \geq \frac{3}{4}\|x\|^2 , \quad \forall x \in \mathcal{X}.
\end{align*}
  \end{proof}
Note that even for Hilbert spaces, the bound 3/4 cannot be improved, see Proposition 2.4 in \cite{GAVRUTA}.
  \section{Acknowledgements}
   First author thanks National Institute of Technology Karnataka (NITK) Surathkal for financial assistance.

 \bibliographystyle{plain}
 \bibliography{reference.bib}

\end{document}